\documentclass[11pt,a4paper,english]{amsart}

\usepackage[english]{babel}
\usepackage[T1]{fontenc}
\usepackage[utf8]{inputenc}
\usepackage{amsmath}
\usepackage{amssymb}
\usepackage{amsthm}
\usepackage{latexsym}
\usepackage{amsfonts}

\usepackage[onehalfspacing]{setspace} 
\onehalfspacing 
\setdisplayskipstretch{.421} 
\newlength{\abovebis} 
\setlength{\abovebis}{\abovedisplayskip} 
\newlength{\belowbis} 
\setlength{\belowbis}{\belowdisplayskip} 
\newlength{\aboveshortbis} 
\setlength{\aboveshortbis}{\abovedisplayshortskip} 
\newlength{\belowshortbis} 
\setlength{\belowshortbis}{\belowdisplayshortskip} 
\everydisplay\expandafter{%
  \the\everydisplay 
  \advance\abovedisplayskip\abovebis 
  \advance\belowdisplayskip\belowbis 
  \advance\abovedisplayshortskip\aboveshortbis 
  \advance\belowdisplayshortskip\belowshortbis 
} 

\def\real{\mathbb{R}}
\def\set#1{\{ \; #1 \; \}}

\def\compl{\mathbb{C}}

\theoremstyle{plain}

\newtheorem{lem}{Lemma}
\newtheorem{theo}[lem]{Theorem}
\newtheorem{prop}[lem]{Proposition}

\theoremstyle{definition}

\newtheorem{rem}{Remark}

\numberwithin{equation}{section}


\begin{document}
\title[Anisotropic inverse problem]{On an inverse problem for anisotropic conductivity in the plane}
\author{Gennadi Henkin}
\address[G. Henkin]{Universit\'{e} Pierre et Marie Curie\\
case 247, 4, place Jussieu, 75252, Paris, France}
\email{henkin@math.jussieu.fr}
\author{Matteo Santacesaria}
\address[M. Santacesaria]{Centre de Mathématiques Appliquées, \'Ecole Polytechnique, 91128, Palaiseau, France}
\email{santacesaria@cmap.polytechnique.fr}
\subjclass{Primary 35R30; Secondary 32G05}
\keywords{Inverse conductivity problem; Anisotropic conductivity; Deformations of complex structures}
\begin{abstract}
Let $\hat \Omega \subset \real^2$ be a bounded domain with smooth boundary and $\hat \sigma$ a smooth anisotropic conductivity on $\hat \Omega$. Starting from the Dirichlet-to-Neumann operator $\Lambda_{\hat \sigma}$ on $\partial \hat \Omega$, we give an explicit procedure to find a unique (up to a biholomorphism) domain $\Omega$, an isotropic conductivity $\sigma$ on $\Omega$ and the boundary values of a quasiconformal diffeomorphism $F:\hat \Omega \to \Omega$ which transforms $\hat \sigma$ into $\sigma$.
\end{abstract}

\maketitle
\section{Introduction}
Let $\Omega \subset \real^2$ be a bounded domain, and let $\sigma$ be a $C^2$-anisotropic conductivity defined over $\Omega$, i.e. $\sigma = (\sigma^{ij})$ is a positive definite symmetric matrix on $\overline \Omega$ in the $C^2$ class. The corresponding Dirichlet-to-Neumann map is the operator $\Lambda_{\sigma} : C^1(\partial \Omega) \to L^p(\partial \Omega)$, $p < \infty$ defined by
\begin{equation}
\Lambda_{\sigma} f = \sigma \frac{\partial u}{\partial \nu}|_{\partial \Omega}  ,
\end{equation}
where $f \in C^1(\partial \Omega)$, $\nu$ is the outer normal of $\partial \Omega$, and $u$ is the $C^1(\overline \Omega)$-solution of the Dirichlet problem
\begin{equation}
\nabla \cdot (\sigma \nabla u) = 0 \; \textrm{on} \; \Omega, \; \; \; u|_{\partial \Omega}=f.
\label{cons}
\end{equation}

The equation (\ref{cons}) represents the conservation of the electrical charge on $\Omega$ if the voltage potential $f$ is applied to $\partial \Omega$, and $\Lambda_{\sigma} f$ is the current flux at the boundary.
The following inverse problem arises from this construction: how much information about $\sigma$ can be detected from the knowledge of the mapping $\Lambda_{\sigma}$? 

Inverse boundary values problems of such a type were formulated in precise mathematical terms by I. Gel'fand \cite{Ge} and by A. Calderon \cite{C}. These problems arise naturally in several areas: geophysical electrical
prospecting (L. Slichter \cite{S}, V. Druskin \cite{D}), medical imaging (D. Barber, B. Brown \cite{BB}), nondestructive testing of materials (A. Friedman, M. Vogelius \cite{FV}), etc.

It is not possible to determine $\sigma$ uniquely from $\Lambda_{\sigma}$. This was discovered by L. Tartar (see \cite{Kohn}). Indeed, let $\Phi:\overline \Omega \to \overline \Omega$ be a diffeomorphism with $\Phi|_{\partial \Omega}=\mathrm{Id}$, where $\mathrm{Id}$ is the identity map. Then we can define the push-forward of $\sigma$ as 
$$\Phi_{\ast} \sigma =  \left( \frac{{}^t (D\Phi) \sigma (D\Phi)}{|\det(D\Phi)|}\right) \circ \Phi^{-1},$$
where $D\Phi$ is the matrix differential of $\Phi$, and one verifies that $\Lambda_{\Phi_{\ast}\sigma} = \Lambda_{\sigma}$.
In dimension two this is the only obstruction to unique identifiability of the conductivity. The anisotropic problem can be reduced to the isotropic one by using isothermal coordinates (Sylvester \cite{Sylvester}), and combining this technique with the result of Nachman for isotropic conductivities (\cite{Nachman}) we obtain the uniqueness result for anisotropic conductivities with two derivatives. The optimal regularity condition was later obtained by Astala-Lassas-Paivarinta, who proved the uniqueness for $L^{\infty}$-conductivities in \cite{Astala}: for an anisotropic conductivity $\sigma \in L^{\infty}(\Omega)$ ($\Omega \subset \real^2$ bounded simply connected domain) the Dirichlet-to-Neumann map determines the equivalence class of conductivities $\sigma'$ such that there exists a diffeomorphism $\Phi : \Omega \to \Omega$ in the $W^{1,2}$ class with $\Phi|_{\partial \Omega} = \mathrm{Id}$ and $\sigma' = \Phi_{\ast} \sigma$.

The main purpose of this article is to clarify and show what one can explicitly reconstruct from a given Dirichlet-to-Neumann operator in the anisotropic case. From the results obtained in \cite{Sylvester}, \cite{Novikov}, \cite{Nachman}, \cite{Gutarts} we have deduced

\begin{theo} \label{teo1}
Let $\hat \Omega \subset \real^2$ be a bounded domain with $C^1$ boundary and let $\hat \sigma$ be a $C^2$-anisotropic conductivity on $\hat \Omega$, isotropic in a neighbourhood of $\partial \hat \Omega$. Suppose we know $\Lambda_{\hat \sigma} : C^1(\partial \hat \Omega) \to L^p(\partial \hat \Omega)$, $p <\infty$.

Then we can reconstruct a unique domain $\Omega \subset \real^2 \sim \compl$ (up to a biholomorphism), an isotropic conductivity $\sigma$ on $\Omega$ and the boundary values $F|_{\partial \hat \Omega}$ of a quasiconformal $C^1$-diffeomorphism $F:\hat \Omega \to \Omega$ such that $\sigma = F_{\ast} \hat \sigma$.
\end{theo}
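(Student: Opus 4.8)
The plan is to reduce the anisotropic problem to the isotropic one via isothermal coordinates, and then invoke the known reconstruction results in the isotropic case.

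First I would recall Sylvester's observation that in two dimensions an anisotropic conductivity $\hat\sigma$ on $\hat\Omega$ can be reduced to an isotropic one by a quasiconformal change of variables. Concretely, writing $\hat\sigma=(\hat\sigma^{ij})$, one solves the Beltrami equation associated to the complex dilatation $\mu$ built out of the entries of $\hat\sigma$ (so that $|\mu|<1$ pointwise, by positive-definiteness and continuity of $\hat\sigma$ up to the boundary); any quasiconformal homeomorphism $F$ with this Beltrami coefficient pulls $\hat\sigma$ back to a conductivity of the form $\gamma\cdot\mathrm{Id}$ for a scalar function $\gamma$. Since $\hat\sigma$ is isotropic near $\partial\hat\Omega$, $\mu$ vanishes there, so $F$ is conformal near the boundary; thus $F$ is a $C^1$-diffeomorphism of $\hat\Omega$ onto a domain $\Omega:=F(\hat\Omega)$, determined up to post-composition with a biholomorphism, and $F|_{\partial\hat\Omega}$ is a conformal-type boundary map. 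The resulting $\sigma=F_\ast\hat\sigma$ is an isotropic $C^2$ (or $C^1$) conductivity on $\Omega$ by the transformation rule and the regularity of $F$.

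The key point — and the main obstacle — is that we do not a priori know $\hat\sigma$ inside $\hat\Omega$; we only know $\Lambda_{\hat\sigma}$. So the reduction above is conceptual; the actual reconstruction must proceed from the boundary data alone. Here I would use: (i) the fact (essentially from the boundary-determination results, e.g. via \cite{Gutarts} together with \cite{Sylvester}) that from $\Lambda_{\hat\sigma}$ one can read off the conformal structure on $\hat\Omega$ induced by $\hat\sigma$, equivalently the boundary values of the uniformizing quasiconformal map $F$ and hence the conformal class of the target domain $\Omega$; and (ii) the fact that $\Lambda_{\hat\sigma}$ is a conformal invariant in the appropriate sense, so that $\Lambda_{\hat\sigma}$ transported by $F|_{\partial\hat\Omega}$ becomes $\Lambda_\sigma$ for the isotropic $\sigma$ on $\Omega$. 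Once $\Lambda_\sigma$ is known on $\partial\Omega$ with $\sigma$ isotropic and $C^2$, Nachman's reconstruction procedure \cite{Nachman} (or, for lower regularity, \cite{Astala}, with the $\overline\partial$-method streamlined as in \cite{Novikov}) recovers $\sigma$ on $\Omega$ uniquely and explicitly. This pins down the triple $(\Omega,\sigma,F|_{\partial\hat\Omega})$.

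Finally I would address uniqueness: if $(\Omega_1,\sigma_1,F_1|_{\partial\hat\Omega})$ and $(\Omega_2,\sigma_2,F_2|_{\partial\hat\Omega})$ both satisfy the conclusion, then $F_2\circ F_1^{-1}:\Omega_1\to\Omega_2$ is quasiconformal and transforms $\sigma_1$ into $\sigma_2$; since both are isotropic, $F_2\circ F_1^{-1}$ must be conformal (its Beltrami coefficient is forced to vanish by the transformation law between two isotropic conductivities), hence a biholomorphism, which is exactly the stated ambiguity. I expect the delicate part to be step (i): extracting the induced conformal structure, and in particular the boundary values of $F$, purely from $\Lambda_{\hat\sigma}$, which is where the cited boundary-reconstruction results (\cite{Sylvester}, \cite{Gutarts}) do the real work, together with making precise the sense in which $\Omega$ is determined only up to biholomorphism.
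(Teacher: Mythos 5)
There is a genuine gap at exactly the point you yourself flag as ``the delicate part.'' Your step (i) --- extracting the boundary values of the uniformizing quasiconformal map $F$ (and hence $\Omega$) from $\Lambda_{\hat \sigma}$ alone --- is asserted by appeal to ``boundary-determination results,'' but no such off-the-shelf result covers the situation of the theorem: the reconstruction of $F|_{\partial \hat \Omega}$ from the Dirichlet-to-Neumann data, \emph{without any topological assumption on $\hat \Omega$}, is precisely the new content being proved, and the closest prior result (\cite{Astala}) handles only simply connected domains. Sylvester's lemma, which you do state correctly, only gives the \emph{existence} of $F$ once $\hat \sigma$ is known in the interior; it provides no bridge from $\Lambda_{\hat \sigma}$ to $F|_{\partial \hat \Omega}$. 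So as written your argument is circular at its core: everything downstream (transporting $\Lambda_{\hat \sigma}$ to $\Lambda_{\sigma}$ via $F|_{\partial \hat \Omega}$, then running the isotropic reconstruction) presupposes the very object whose recoverability is at stake.

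The missing mechanism, in the paper, is the family of Faddeev-type solutions. One normalizes $F$ by extending $\hat \sigma = I$ outside $\hat \Omega$ and requiring $F(z) = z + O(1/z)$ at infinity (this also fixes the ``up to biholomorphism'' ambiguity concretely, rather than leaving $F$ defined only up to post-composition as in your sketch). One then shows existence and uniqueness of global solutions $\hat \psi(z,\lambda)$ of $\nabla \cdot (\hat \sigma \nabla \hat \psi) = 0$ with $\hat \psi(z,\lambda) = e^{\lambda z}(1 + O(1/z))$, and that $\hat \psi(z,\lambda) = \psi(F(z),\lambda)$ where $\psi$ is the isotropic Faddeev solution. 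Two facts then do the real work: first, $\hat \psi|_{\partial \hat \Omega}$ is computable from $\Lambda_{\hat \sigma}$ via a Fredholm-type boundary integral equation with the Faddeev--Green function (the anisotropic analogue, following \cite{Gutarts}, of Novikov's equation \cite{Novikov}); second, the large-$|\lambda|$ asymptotics give
\begin{equation*}
F(z) = \lim_{|\lambda| \to \infty} \frac{\log \hat \psi(z,\lambda)}{\lambda}, \qquad z \in \partial \hat \Omega,
\end{equation*}
which is the explicit formula recovering $F|_{\partial \hat \Omega}$ and hence $\partial \Omega$. Only after this does the transport identity $\int_{\partial \hat \Omega} \hat u \, \Lambda_{\hat \sigma} \hat v = \int_{\partial \Omega} u \, \Lambda_{\sigma} v$ and the Novikov--Nachman scheme (via the scattering amplitude $b(\lambda)$ and the $\overline \partial$-equation) recover $\sigma$ on $\Omega$, which is where your proposal and the paper coincide. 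Your final uniqueness remark (a quasiconformal map carrying one isotropic conductivity to another must be conformal) is fine, but it does not repair the reconstruction gap.
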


The new point in this statement is the existence of $F: \hat \Omega \to \Omega$ (and its explicit reconstruction at the boundary) without any assumption on the topology of $\hat \Omega$. Early in \cite{Astala} this result was proved for simply connected domains, a situation in which the question about deformations of complex structures of $\hat \Omega$ does not make sense.

\smallskip

Our main tool, as in \cite{Sylvester} and \cite{Astala}, is the global solution $F$ of a certain Beltrami equation equipped with an asymptotic condition, which takes our anisotropic conductivity $\hat \sigma$ into an isotropic one, $\sigma$, defined in general over a different domain $\Omega = F(\hat \Omega)$.
With the help of $F$ we then show the existence and uniqueness of a family of solutions $\hat \psi(z,\lambda)$ of the anisotropic conductivity equation, with special asymptotics at infinity, using also the existence of such type of functions in the isotropic case, that we call $\psi(w,\lambda)$ (firstly introduced by Faddeev in \cite{Faddeev}; see \cite{Novikov}, \cite{Nachman} for the main properties).
Then we show how one can reconstruct the boundary values of $\hat \psi$ from the Dirichlet-to-Neumann operator $\Lambda_{\hat \sigma}$, for any $\lambda$, with a Fredholm-type integral equation, following the work of Gutarts (\cite{Gutarts}). This is a generalization of R. Novikov's method for isotropic conductivities (\cite{Novikov}). We also show how to find the boundary values of $F$ from the knowledge of $\hat \psi|_{\partial \hat \Omega}$ (generalizing the result in \cite{Sylvester} and \cite{Astala}), and so we find $F(\partial \hat \Omega) = \partial \Omega$ (therefore also $\Omega$).

After this, we explain how the knowledge of $\Lambda_{\hat \sigma}, \; \hat \psi|_{\partial \hat \Omega}$ and $F|_{\partial \hat \Omega}$ suffices to reconstruct the isotropic scattering amplitude $b(\lambda)$. We give also another method: we define the anisotropic scattering amplitude $\hat b(\lambda)$, and we show that it is equal to the isotropic one, proving that it is essentially a quasiconformal invariant. This result was already included in \cite{Gutarts}; here we give a new simpler proof.

Thus with both methods, starting from $\hat \psi|_{\partial \hat \Omega}$ we can reconstruct the isotropic scattering amplitude: this allows us to write the $\overline \partial$-equation which will permit us to find the isotropic conductivity $\sigma$ on $\Omega$, by the Novikov-Nachman reconstruction scheme (\cite{Novikov}, \cite{Nachman}).

Our scheme can be summarized in the following diagram

$$\Lambda_{\hat \sigma} \to \hat \psi|_{\partial \hat \Omega} \to \left\{ \begin{array}{c} b(\lambda) \\ F|_{\partial \hat \Omega} \end{array} \right. \to \left\{ \begin{array}{c} \sigma \\ \Omega \end{array} \right.$$

All steps of this reconstruction scheme are explicit and can be numerically implemented using the Novikov-Nachman reconstruction-type algorithm \cite{Novikov}, \cite{Nachman}. Therefore, our paper admits potential practical applications.

\begin{rem}
Although we cannot reconstruct $\hat \sigma$ uniquely, for the applications it may be useful to find one representative of the equivalence class of $\hat \sigma$. To do this, using our theorem it suffices to find a diffeomorphism $G: \hat \Omega \to  \Omega$ with fixed boundary values (which are the boundary values of a quasiconformal mapping, in our notation $F|_{\partial \hat \Omega}$), and no other particular restriction: in this way $(G^{-1})_{\ast} \sigma$ will be a representative of $\hat \sigma$. If $\Omega$ is simply connected one can use the Ahlfors-Beurling extension theorem for quasi-symmetric homeomorphism of the circle (\cite[Thm. 2, p.69]{Ahlfors}).
\end{rem}

\begin{rem}
An analogous result to our Theorem 1 is valid also on bordered surfaces in $\real^3$.
\end{rem}

\begin{rem}
One of the referees has drown our attention to the possible relation of our paper to the publications \cite{Kol1} and \cite{Kol2}. In these papers is shown that for the inverse isotropic-conductivity problem in an inaccurately modelled (simply connected) domain there is a unique anisotropic conductivity, corresponding to the boundary measurements, which has the minimal possible anisotropy; this minimally-anisotropic conductivity can be <<isotropized>>, using Beltrami equation, in order to obtain the original isotropic conductivity (up to biholomorphisms of simply connected domains). These papers have certainly some common parts with \cite{Astala}, where the inverse anisotropic-conductivity problem on simply connected domains is studied. But these publications have no common points with our paper; indeed our main novelty consists in the complete study of the inverse anisotropic-conductivity problem in arbitrary domains (not necessarily simply connected) with smooth boundaries. Nevertheless, our results can be applied to extend the above-mentioned publications to the case of non simply connected domains.
\end{rem}

\section{The Beltrami equation and Faddeev-type anisotropic solutions}

We identify $\real^2$ with $\compl$ by the map $(x,y) \mapsto x+iy = z$ and we use the notation

$$\partial_z = \frac 1 2 (\partial_x -i \partial_y), \; \; \; \partial_{\overline z} = \frac 1 2 (\partial_x +i\partial_y)$$
where $\partial_x=\partial / \partial x$ and $\partial_y=\partial / \partial y$. We will also use the differential operators $\partial, \; \overline \partial$ such that $\partial f= \partial_z f dz$, $\overline \partial f= \partial_{\overline z} f d \overline z$, with $dz = dx + i dy$, $d\overline z= dx -i dy$. We also recall the identity $d = \partial + \overline \partial$.

We can suppose that $\hat \sigma$, already isotropic near $\partial \hat \Omega$, is the identity near $\partial \hat \Omega$ (see \cite{Nachman} for the reduction to this case). Besides, we extend $\hat \sigma$ to the whole complex plane by putting $\hat \sigma = I$ for $z \in \compl \setminus \hat \Omega$. Then, for the conductivity $\hat \sigma = \hat \sigma^{ij}$ we define the following Beltrami coefficient

$$\mu_1(z) = \frac{-\hat \sigma^{11}(z)+\hat \sigma^{22}(z)-2i \hat \sigma^{12}(z) }{\hat \sigma^{11}(z) + \hat \sigma^{22}(z) + 2 \sqrt{\det(\hat \sigma)}} $$
which satisfies $| \mu_1 (z) | \leq k < 1$ and is compactly supported in $\hat \Omega$. We now recall the existence of a diffeomorphism that transforms $\hat \sigma$ into an isotropic conductivity.

\begin{prop}(Sylvester \cite[Prop. 2.1]{Sylvester})
There is a quasiconformal $C^1$-diffeomorphism \\ $F: \compl \to \compl$ such that

$$F(z) = z + O\left(\frac 1 z\right) \; \; \; as \; |z| \to \infty,$$
and for which
$$(F_{\ast} \hat \sigma)(z) = \sigma (z) I : = (\det ( \hat \sigma))^{1/2} \circ F^{-1}(z) I.$$
\end{prop}

Thanks to results by Ahlfors and Vekua (\cite{Ahlfors}, \cite{Vekua}), $F$ is obtained as the solution of the Beltrami equation $\partial_{\overline z}F=\mu_1 \partial_z F$, so $F$ is holomorphic in $\compl \setminus \hat \Omega$.

\begin{prop} \label{propfad}
There exist unique Faddeev-type solutions of the anisotropic conductivity equation, i.e. functions $\hat \psi (z,\lambda)$ such that 
\begin{equation} \label{anis}
\nabla \cdot (\hat \sigma (\nabla \hat \psi)) = 0
\end{equation}
for all $z \in \compl, \; \; \lambda \in \compl$, and $\hat \psi (z,\lambda) = e^{\lambda z} ( 1+ O(\frac 1 z))$ when $z \to \infty$.
\end{prop}

Proposition \ref{propfad} for the case $\det\hat\sigma$ close to a constant was obtained firstly in \cite{Sylvester}.

\begin{proof}
We define $\Omega = F(\hat \Omega)$ and $q = \frac{\Delta \sigma ^{1/2}}{\sigma^{1/2}}$. It is known that if $u$ is a solution of $\nabla \cdot (\sigma \nabla u) =0$ in $\Omega$, then $\tilde u = \sigma^{1/2} u$ is a solution of 
\begin{equation} \label{sch}
- \Delta \tilde u + q \tilde u =0
\end{equation}
in $\Omega$.
From \cite{Beals}, \cite{Nachman} and \cite{Novikov}, we have that for every $\lambda \in \compl$ there is a unique solution $\tilde \psi (w,\lambda)$ of (\ref{sch}) with the asymptotic behaviour $\tilde \psi (w,\lambda) = e^{\lambda w} ( 1+ O(\frac 1 w))$ when $w \to \infty$.
So we directly have that $\psi(w, \lambda) : = \sigma^{-1/2}\tilde \psi (w, \lambda)$ is a solution of $\nabla \cdot (\sigma \nabla \psi) =0$ with the same asymptotic (because $\sigma = 1$ outside $\Omega$).

Now let $\hat \psi (z, \lambda)$ be a Faddeev-type anisotropic solution. If we consider $\psi' (w, \lambda) = \hat \psi (F^{-1} (w),\lambda)$, we have that $\nabla \cdot (\sigma \nabla \psi') =0$ from the construction of $\sigma$. Using the properties of $F$ and $\hat \psi$, we get, for $w \to \infty$,
\begin{align*}
\psi'(w, \lambda)= \hat \psi (F^{-1}(w),\lambda) &= e^{\lambda F^{-1}(w)} \left(1+O\left(\frac{1}{|F^{-1}(w)|} \right) \right) \\
&=e^{\lambda w} \left(  1+ O\left(\frac{1}{1+|w|}\right) \right)
\end{align*}
showing that $\psi'(w,\lambda)$ satisfies the same asymptotic of $\psi(w, \lambda)$. From the uniqueness of $\psi(w, \lambda)$ we obtain 
\begin{equation} \label{fadd}
\hat \psi (z, \lambda) = \psi (F(z),\lambda),
\end{equation}
which proves both existence and uniqueness.
\end{proof}

From the equality (\ref{fadd}) we can also derive a useful formula to calculate $F|_{\partial \hat \Omega}$.
In fact, results in \cite{Faddeev2} also indicate how the family of Faddeev-type solutions behaves with respect to $\lambda$. We have indeed $|e^{-w \lambda} \tilde \psi (w,\lambda)-1| \to 0$ as $|\lambda| \to \infty$ for every fixed $w \in \compl$. If we take $w \in \compl \setminus \Omega$ the same limit is also valid for $\psi(w,\lambda)$; combining this with (\ref{fadd}) we deduce the following formula.
\begin{prop}(\cite[Prop. 2.7]{Sylvester})
For all $z \in \compl \setminus \hat \Omega$ (in particular for $z \in \partial \hat \Omega$) we have
$$\lim_{|\lambda| \to \infty} \frac{\log (\hat \psi (z,\lambda))}{\lambda}=\lim_{|\lambda| \to \infty} \frac{\log (\psi (F(z),\lambda))}{\lambda} = F(z).$$
\end{prop}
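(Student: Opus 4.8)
\section*{Proof proposal}

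The plan is to extract the statement from formula~(\ref{fadd}) together with the known behaviour of the Faddeev solutions of the Schr\"odinger equation~(\ref{sch}) as $|\lambda|\to\infty$. The first equality is immediate: by~(\ref{fadd}) we have $\hat\psi(z,\lambda)=\psi(F(z),\lambda)$ for every $z\in\compl$ and every $\lambda$, so the two logarithmic limits are literally the same expression. Hence the only content is the second equality, and after the change of variable $w=F(z)$ it is enough to prove
$$\lim_{|\lambda|\to\infty}\frac{\log\psi(w,\lambda)}{\lambda}=w\qquad\text{for each fixed }w\in\compl\setminus\Omega;$$
this substitution is legitimate because $F:\compl\to\compl$ is a bijection, so $z\in\compl\setminus\hat\Omega$ forces $w=F(z)\in\compl\setminus\Omega$ (and $z\in\partial\hat\Omega$ forces $w\in\partial\Omega$).

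First I would reduce $\psi$ to $\tilde\psi$ on the exterior region. By construction $\psi(w,\lambda)=\sigma^{-1/2}(w)\,\tilde\psi(w,\lambda)$, and $\sigma\equiv1$ on $\compl\setminus\Omega$: indeed $\hat\sigma=I$ on $\compl\setminus\hat\Omega$, hence $\det\hat\sigma\equiv1$ there, and $\sigma=(\det\hat\sigma)^{1/2}\circ F^{-1}$ gives $\sigma(w)=1$ for $w\in\compl\setminus\Omega=F(\compl\setminus\hat\Omega)$. Therefore $\psi(w,\lambda)=\tilde\psi(w,\lambda)$ for all such $w$, and it suffices to analyse $\tilde\psi$. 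At this point I would invoke the asymptotics quoted from \cite{Faddeev2}: for each fixed $w$ one has $e^{-w\lambda}\tilde\psi(w,\lambda)\to1$ as $|\lambda|\to\infty$.

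Granting this, for $|\lambda|$ large enough (depending on $w$) the complex number $e^{-w\lambda}\tilde\psi(w,\lambda)$ lies in the disc $\{\,|\zeta-1|<\tfrac12\,\}$; in particular $\tilde\psi(w,\lambda)\neq0$, and with $\mathrm{Log}$ the principal branch of the logarithm we may write
$$\log\tilde\psi(w,\lambda)=w\lambda+\mathrm{Log}\!\left(e^{-w\lambda}\tilde\psi(w,\lambda)\right).$$
Dividing by $\lambda$ gives $\dfrac{\log\tilde\psi(w,\lambda)}{\lambda}=w+\dfrac1\lambda\,\mathrm{Log}\!\left(e^{-w\lambda}\tilde\psi(w,\lambda)\right)$, and since $\mathrm{Log}\!\left(e^{-w\lambda}\tilde\psi(w,\lambda)\right)\to\mathrm{Log}(1)=0$ while $|\lambda|\to\infty$, the last term tends to $0$. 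This is the desired limit; putting $w=F(z)$ and recalling $\hat\psi(z,\lambda)=\psi(F(z),\lambda)$ completes the argument.

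I expect the only genuinely delicate point to be the choice of branch of the logarithm: the identity above only makes sense once $\tilde\psi$ (equivalently $\hat\psi$) is known to be non-vanishing and uniformly close to $e^{w\lambda}$ for large $|\lambda|$, which is exactly what the cited estimate of \cite{Faddeev2} supplies, so it enters the argument in an essential way. Everything else --- the reduction $\psi=\tilde\psi$ outside $\Omega$ and the use of~(\ref{fadd}) --- is bookkeeping, and no uniformity in $z$ (or $w$) is needed since the statement is pointwise.
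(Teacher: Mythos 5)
Your proposal is correct and follows essentially the same route as the paper: it reduces via (\ref{fadd}) to the exterior asymptotics of the Faddeev solution, uses $\sigma\equiv 1$ outside $\Omega$ to identify $\psi$ with $\tilde\psi$ there, and then applies the limit $e^{-w\lambda}\tilde\psi(w,\lambda)\to 1$ from \cite{Faddeev2}. The only difference is that you spell out the branch-of-logarithm bookkeeping, which the paper leaves implicit.
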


\section{An integral equation for $\hat \psi|_{\partial \hat \Omega}$}
Following the approach of \cite{Gutarts}, we show that, as in the isotropic case, we can find $\hat \psi|_{\partial \hat \Omega}$ through a Fredholm-type integral equation.

The main idea is to decompose the differential operator $-\nabla \cdot \hat \sigma \nabla$ as \linebreak $- \Delta + M,$ where $M$ is a compactly supported operator.
So we can characterize $\hat \psi (z,\lambda)$ as the solution of the following integral equation:

$$\hat \psi (z,\lambda) = e^{z \lambda} - \frac{i}{2}\int_{\hat \Omega} G(z-w,\lambda) M\hat \psi(w,\lambda) dw \wedge d\overline w,$$
where  
$$G(z,\lambda)= \frac{ie^{\lambda z}}{2(2 \pi)^2} \int_{\compl} \frac{e^{i(w \overline z + \overline w z)} dw \wedge d \overline w}{w(\overline w-i \lambda)}, \; \; z \in \compl, \; \lambda \in \compl$$
is the Faddeev-Green function for the Laplacian.

\begin{prop}(\cite[Lemma 2.4]{Gutarts})
For every $\lambda \in \compl$ the boundary value of $\hat \psi$ satisfies
\begin{equation} \label{bord}
\hat \psi (z, \lambda)|_{\partial \hat \Omega} = e^{z \lambda} - \int_{\partial \hat \Omega} G(z-w,\lambda) (\Lambda_{\hat \sigma}-\Lambda_0)\hat \psi(w,\lambda) dw,
\end{equation}
where $\Lambda_0$ is the Dirichlet-to-Neumann operator of the standard Laplacian (or for the case of constant conductivity).
\end{prop}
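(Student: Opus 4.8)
The plan is to read \eqref{bord} off from the interior integral equation for $\hat\psi$ — which holds for \emph{every} $z\in\compl$ — by converting its (compactly supported) volume term into a boundary integral through two applications of Green's second identity, the passage to $z\in\partial\hat\Omega$ being then a routine limiting argument.

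First, fix $\lambda$ and take $z\in\compl\setminus\overline{\hat\Omega}$, so that $w\mapsto G(z-w,\lambda)$ has no singularity on a neighbourhood of $\overline{\hat\Omega}$ and satisfies $\Delta_w G(z-w,\lambda)=0$ there. Since $\hat\sigma=I$ outside $\hat\Omega$, the operator $M=-\nabla\cdot\hat\sigma\nabla+\Delta$ is supported in $\hat\Omega$, and on $\hat\Omega$ one has $M\hat\psi=\Delta\hat\psi$ because $\nabla\cdot(\hat\sigma\nabla\hat\psi)=0$ there. Applying Green's second identity on $\hat\Omega$ to the pair $\hat\psi$ and $G(z-\cdot,\lambda)$, the $\hat\psi\,\Delta_w G$ term drops out and the volume integral $\int_{\hat\Omega}G(z-w,\lambda)\,M\hat\psi(w,\lambda)\,dw\wedge d\overline w$ becomes (up to the numerical factor fixed by the normalisation of $-\tfrac{i}{2}\,dw\wedge d\overline w$) a difference of two boundary integrals,
\[
\int_{\partial\hat\Omega}G(z-w,\lambda)\,\partial_\nu\hat\psi(w,\lambda)\,ds(w)\;-\;\int_{\partial\hat\Omega}\hat\psi(w,\lambda)\,\partial_\nu G(z-w,\lambda)\,ds(w).
\]
Here, because $\hat\sigma=I$ near $\partial\hat\Omega$, the conormal derivative $\partial_\nu\hat\psi|_{\partial\hat\Omega}$ equals $(\nu\cdot\hat\sigma\nabla\hat\psi)|_{\partial\hat\Omega}$, which — since $\hat\psi$ restricted to $\hat\Omega$ solves the conductivity equation with Dirichlet data $\hat\psi|_{\partial\hat\Omega}$ — is precisely $\Lambda_{\hat\sigma}(\hat\psi|_{\partial\hat\Omega})$.

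It remains to treat the term $\int_{\partial\hat\Omega}\hat\psi\,\partial_\nu G\,ds$. For this I would introduce the harmonic replacement $\hat\psi_0$ of $\hat\psi$ in $\hat\Omega$, i.e.\ the solution of $\Delta\hat\psi_0=0$ in $\hat\Omega$ with $\hat\psi_0|_{\partial\hat\Omega}=\hat\psi|_{\partial\hat\Omega}$, whose conormal derivative is by definition $\Lambda_0(\hat\psi|_{\partial\hat\Omega})$. A second application of Green's second identity, now to $\hat\psi_0$ and $G(z-\cdot,\lambda)$ — both harmonic on $\hat\Omega$ for $z\notin\overline{\hat\Omega}$ — makes both volume terms vanish and yields
\[
\int_{\partial\hat\Omega}\hat\psi(w,\lambda)\,\partial_\nu G(z-w,\lambda)\,ds(w)\;=\;\int_{\partial\hat\Omega}G(z-w,\lambda)\,\Lambda_0(\hat\psi|_{\partial\hat\Omega})(w)\,ds(w).
\]
Combining the two displays, the volume term of the interior integral equation equals $\int_{\partial\hat\Omega}G(z-w,\lambda)\,(\Lambda_{\hat\sigma}-\Lambda_0)(\hat\psi|_{\partial\hat\Omega})(w)\,ds(w)$; substituting this back gives \eqref{bord} for every $z\in\compl\setminus\overline{\hat\Omega}$. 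Letting $z$ tend to $\partial\hat\Omega$ then gives \eqref{bord} there: the left side is continuous because $\hat\psi\in C^1(\overline{\hat\Omega})$, and the right side is the boundary value of a single-layer-type potential with the only logarithmically singular kernel $G(\cdot,\lambda)$ and continuous density $(\Lambda_{\hat\sigma}-\Lambda_0)\hat\psi(\cdot,\lambda)$, hence continuous up to the boundary.

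The argument is essentially that of Gutarts (and of Novikov in the isotropic case), so I do not expect a genuine obstacle; the points needing care are (i) the legitimacy of Green's identity and of the boundary passage for the Faddeev--Green kernel $G(\cdot,\lambda)$, whose dependence on $w$ and on $\lambda$ is more delicate than that of the ordinary logarithmic potential — this is controlled here by $\hat\sigma\in C^2$ together with the isotropy of $\hat\sigma$ near $\partial\hat\Omega$, which keeps $M$ supported strictly inside $\hat\Omega$ and $\hat\psi$ smooth near the boundary; and (ii) bookkeeping of the numerical normalisations hidden in $G$ and in the factor $-\tfrac{i}{2}\,dw\wedge d\overline w$, and of the conversion between the arc-length measure used in Green's identity and the boundary differential in \eqref{bord}, so that the coefficient in front of the boundary integral comes out to be exactly $-1$ as stated. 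The hypothesis of isotropy near $\partial\hat\Omega$ is precisely what makes the identification of conormal derivatives with $\Lambda_{\hat\sigma}$ and $\Lambda_0$ legitimate, and is therefore essential to the whole reduction.
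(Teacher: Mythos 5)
Your proposal is correct and follows essentially the paper's own route: the two applications of Green's second identity you carry out (one with $\hat\psi$, one with its harmonic replacement $\hat\psi_0$) are precisely a proof of the key identity (\ref{iden}), $\int_{\partial \hat \Omega} u_0(\Lambda_{\hat \sigma} - \Lambda_0)u= \int_{\hat \Omega} u_0 M u$, applied with $u_0 = G(z-\cdot,\lambda)$ and $u=\hat\psi$, which is exactly how the paper deduces (\ref{bord}) from the interior integral equation. Your additional care (working first with $z\notin\overline{\hat\Omega}$ and then passing to the boundary through the continuity of the single-layer potential) only makes explicit what the paper leaves implicit, so the approaches coincide.
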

This follows from the identity
\begin{equation} \label{iden}
\int_{\partial \hat \Omega} u_0(\Lambda_{\hat \sigma} - \Lambda_0)u= \int_{\hat \Omega} u_0 M u,
\end{equation}
where $u_0, u \in W^{1,2}(\hat \Omega), \; \nabla \cdot (\hat \sigma \nabla u) = 0, \; \Delta u_0 =0$ in $\hat \Omega$.

The fact that the integral equation (\ref{bord}) is of Fredholm type in the Sobolev space $W^{s,2}(\partial \hat \Omega)$ is the content of \cite[Lemma 2.5]{Gutarts}, and it is uniquely solvable by \cite[Lemma 2.6]{Gutarts} (these properties are implied by the same results in the isotropic case \cite{Nachman}).

\section{Reconstruction of the scattering amplitude}

Following \cite{Faddeev2}, we define the \textit{non-physical} scattering amplitude for the isotropic inverse problem as
\begin{equation}
b(\lambda) = \int_{\Omega} e^{-\overline \lambda \overline w} q(w)\tilde \psi (w, \lambda) dw.
\end{equation}
From \cite{Novikov} we have

$$b(\lambda) = \int_{\partial \Omega} e^{-\overline \lambda \overline w} (\Lambda_q - \Lambda_0) \tilde \psi (w, \lambda) dw,$$
where $\Lambda_q$ is the Dirichlet-to-Neumann operator of the Schr\"odinger equation (\ref{sch}).

Since $\sigma$ is the identity near $\partial \Omega$, equation $\Lambda_q= \sigma^{-1/2} (\Lambda_{\sigma} + \frac 1 2 \frac{\partial \sigma}{\partial \nu}) \sigma^{-1/2}$ reads $\Lambda_q = \Lambda_{\sigma}$, and $\tilde \psi|_{\partial \Omega} = \psi|_{\partial \Omega}$, so

\begin{equation} \label{scatt1}
b(\lambda) = \int_{\partial \Omega} e^{-\overline \lambda \overline w} (\Lambda_{\sigma} - \Lambda_0) \psi (w, \lambda) dw.
\end{equation}

Thus, for the reconstruction of $b$, it is sufficient to determine $\Lambda_{\sigma}$ and $\psi|_{\partial \Omega}$. By (\ref{fadd}) we already know $\psi|_{\partial \Omega}$; for the determination of $\Lambda_{\sigma}$, by arguments of \cite{Astala}, we obtain the identity 
\begin{equation} \label{diri}
\int_{\partial \hat \Omega} \hat u \Lambda_{\hat \sigma} \hat v =\int_{\partial \Omega} u \Lambda_{\sigma} v
\end{equation}
which holds for any $\hat u, \hat v \in C^{1} (\partial \hat \Omega)$ and $u,v \in C^{1} (\partial \Omega)$ such that $\hat u = u \circ F$ and $\hat v = v \circ F$ (this follows directly from the properties of $F$ and the symmetry of the two Dirichlet-to-Neumann operators). So we find $\Lambda_{\sigma}$ from $\Lambda_{\hat \sigma}$ and $F|_{\partial \hat \Omega}$.

\subsection{Complementary result}
We give here another method to find $b(\lambda)$.
Inspired by \cite{Gutarts}, we define the anisotropic scattering amplitude as
\begin{equation}
\hat b(\lambda) = \int_{\hat \Omega} e^{-\overline \lambda \overline z} M\hat \psi (z, \lambda) dz
\end{equation}

and we have the following result.
\begin{prop} \label{ident}
$b(\lambda) = \hat b(\lambda)$
\end{prop}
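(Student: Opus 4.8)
The plan is to show that the anisotropic scattering amplitude $\hat b(\lambda)$ coincides with the isotropic one $b(\lambda)$ by performing the change of variables $w = F(z)$ in the integral defining $b(\lambda)$, and checking that the operator $M$ (the ``potential'' part of $-\nabla\cdot\hat\sigma\nabla = -\Delta + M$) transforms correctly under $F$. Concretely, I would start from
\begin{equation*}
b(\lambda) = \int_{\Omega} e^{-\overline\lambda \overline w}\, q(w)\,\tilde\psi(w,\lambda)\, dw\wedge d\overline w,
\end{equation*}
rewrite $q\tilde\psi = \Delta\tilde\psi$ (using $-\Delta\tilde\psi + q\tilde\psi = 0$) and recall that $\tilde\psi = \sigma^{1/2}\psi$ while $\psi(w,\lambda) = \hat\psi(F^{-1}(w),\lambda)$ by \eqref{fadd}. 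The heart of the argument is the identity $q\tilde\psi\,dw\wedge d\overline w$ pulled back along $F$ equals $M\hat\psi\,dz\wedge d\overline z$; this is exactly the statement that the decomposition $-\Delta + M$ of the anisotropic operator is the $F$-pullback of the decomposition $-\Delta + q$ of the (conjugated) isotropic operator, which is built into the construction of $M$ in Section~3 (the operator $M$ was \emph{defined} so that $-\nabla\cdot\hat\sigma\nabla = -\Delta + M$, and by Sylvester's lemma $F_\ast\hat\sigma = \sigma I$, so the two problems are conjugate under $F$).

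Next I would track the exponential factor. Since $F$ is holomorphic outside $\hat\Omega$ and $\sigma \equiv 1$, $q \equiv 0$ there, the integrand is supported in $\hat\Omega$ (resp. $\Omega$), but the factor $e^{-\overline\lambda\overline w}$ does \emph{not} transform to $e^{-\overline\lambda\overline z}$ inside the domain because $F$ is only quasiconformal there. The resolution is that the products $e^{-\overline\lambda\overline w} q(w)\tilde\psi(w,\lambda)$ and $e^{-\overline\lambda\overline z} M\hat\psi(z,\lambda)$ are integrals of the \emph{same} $\overline\partial$-closed-in-$\lambda$ type objects: more precisely, one uses that $b(\lambda)$ and $\hat b(\lambda)$ can each be written as a boundary integral over $\partial\Omega$, resp. $\partial\hat\Omega$, via the identities \eqref{iden} and the analogous isotropic identity $\int_{\partial\Omega} u_0(\Lambda_\sigma - \Lambda_0)\psi = \int_\Omega u_0 q\psi$ with $u_0 = e^{-\overline\lambda\overline w}$ (which is harmonic in $w$). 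On the boundary, where $F$ is conformal-like in the sense that $\hat\sigma = I = \sigma\circ F$, the factor $e^{-\overline\lambda\overline w}$ with $w = F(z)$ matches, and $(\Lambda_{\hat\sigma} - \Lambda_0)\hat\psi$ corresponds to $(\Lambda_\sigma - \Lambda_0)\psi$ under composition with $F$ by the transformation rule \eqref{diri} together with \eqref{fadd}. Chaining these gives
\begin{equation*}
\hat b(\lambda) = \int_{\partial\hat\Omega} e^{-\overline\lambda\overline{F(z)}}(\Lambda_{\hat\sigma} - \Lambda_0)\hat\psi(z,\lambda)\, dz = \int_{\partial\Omega} e^{-\overline\lambda\overline w}(\Lambda_\sigma - \Lambda_0)\psi(w,\lambda)\, dw = b(\lambda).
\end{equation*}

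The main obstacle I anticipate is making the change-of-variables argument rigorous at the level of the operator $M$ rather than just the conductivity: $M$ is a differential operator (not multiplication), so one must verify that $\int_{\hat\Omega} e^{-\overline\lambda\overline z} M\hat\psi\, dz\wedge d\overline z$ really equals the corresponding isotropic integral, which requires either (i) integrating by parts to move $M$ onto the exponential and exploiting that $e^{-\overline\lambda\overline z}$ is harmonic, so only the zeroth-order/boundary contributions survive, or (ii) invoking \eqref{iden} to pass to the boundary immediately, where the discrepancy between $e^{-\overline\lambda\overline z}$ and $e^{-\overline\lambda\overline{F(z)}}$ disappears because $F = \mathrm{id} + O(1/z)$ is conformal near $\partial\hat\Omega$ only if $\hat\Omega$ is normalized that way — in general one must carry $e^{-\overline\lambda\overline{F(z)}}$ through and check it is still $\Delta$-harmonic-enough after the (holomorphic, outside $\hat\Omega$) change of variables. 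Route (ii) is cleaner and is the one I would write up: reduce both $b$ and $\hat b$ to boundary integrals, then apply \eqref{diri} and Proposition on $\hat\psi = \psi\circ F$ to identify them termwise.
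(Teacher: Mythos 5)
Your first step is the paper's: apply \eqref{iden} with $u_0 = e^{-\overline\lambda\overline z}$ to write $\hat b(\lambda)$ as a boundary integral over $\partial\hat\Omega$, then transport it to $\partial\Omega$ with \eqref{diri} and \eqref{fadd}. But there is a genuine gap at the key step, and your displayed chain hides it: by \eqref{iden}, $\hat b(\lambda) = \int_{\partial\hat\Omega} e^{-\overline\lambda\overline z}\,(\Lambda_{\hat\sigma}-\Lambda_0)\hat\psi\,dz$, with weight $e^{-\overline\lambda\overline z}$, not $e^{-\overline\lambda\overline{F(z)}}$ as you wrote. Transporting to $\partial\Omega$ via \eqref{diri} therefore produces the weight $e^{-\overline{\lambda G(w)}}$, $G=F^{-1}$, and \emph{not} $e^{-\overline\lambda\overline w}$; the claim that ``on the boundary the factor matches'' is false in general, since $F|_{\partial\hat\Omega}$ is not the identity (it is only $z+O(1/z)$ at infinity; if it were the identity on $\partial\hat\Omega$ one would have $\Omega=\hat\Omega$ and the whole point of the theorem would evaporate). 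So after your reduction you are left with exactly the nontrivial identity
\begin{equation*}
\int_{\partial\Omega} e^{-\overline{\lambda G(w)}}(\Lambda_\sigma-\Lambda_0)\psi\,dw \;=\; \int_{\partial\Omega} e^{-\overline\lambda\overline w}(\Lambda_\sigma-\Lambda_0)\psi\,dw,
\end{equation*}
and neither of your two routes supplies an argument for it: route (i) is only sketched, and route (ii) asserts the discrepancy ``disappears,'' which it does not on $\partial\Omega$ itself.

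The paper closes precisely this gap with its Lemma: for any $\phi\in C^1(\partial\Omega)$ one has $\int_{\partial\Omega}\phi(\Lambda_\sigma-\Lambda_0)\psi = 2i\int_{\partial\Omega}\phi\,(\overline\partial\psi-\overline\partial\psi_0)$, where $\psi_0$ is the harmonic extension of $\psi|_{\partial\Omega}$. With the boundary density written in this $\overline\partial$-form, one applies Stokes' theorem on $\Omega_R\setminus\Omega$: since $e^{-\overline{\lambda E(w)}}$ is antiholomorphic there for any $E$ holomorphic outside $\Omega$, and $\partial\overline\partial\psi = \partial\overline\partial\psi_0 = 0$ there, the contour can be pushed from $\partial\Omega$ to $\partial\Omega_R$ for both $E(w)=G(w)$ and $E(w)=w$; then the asymptotics $G(w)=w+O(1/|w|)$ let you replace $G(w)$ by $w$ in the limit $R\to\infty$, which is where the two exponential weights are finally identified. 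If you want to salvage your write-up, you must either reproduce this contour-deformation argument (or an equivalent one exploiting that the discrepancy $e^{-\overline{\lambda G(w)}}-e^{-\overline\lambda\overline w}$ is antiholomorphic outside $\Omega$ and vanishes at infinity), rather than asserting the weights agree on the boundary.
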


We will need the following lemma
\begin{lem}
For every $\phi \in C^1(\partial \hat \Omega)$, $\psi \in C^1(\hat \Omega)$ solution of $\nabla \cdot (\hat \sigma \nabla \psi)=  ( \Delta - M)\psi = 0$ in $\hat \Omega$, we have

\begin{equation}
\int_{\partial \hat \Omega} \phi(\Lambda_{\hat \sigma}-\Lambda_0)\psi = 2i \int_{\partial \hat \Omega} \phi (\overline \partial \psi - \overline \partial \psi_0),
\end{equation}
where $\Delta \psi_0 = 0$ in $\hat \Omega$ and $\psi_0|_{\partial \hat \Omega}=\psi|_{\partial \hat \Omega}$. 
\end{lem}

\begin{proof}
Let $a \in C^1(\hat \Omega)$ such that $a|_{\partial \hat \Omega}=\phi$, and $w=x+iy$. From the definition of the Dirichlet-to-Neumann operator and from Stokes' theorem, one has
$$\int_{\partial \hat \Omega} \phi (\Lambda_{\hat \sigma}-\Lambda_0)\psi = \int_{\hat \Omega} (\nabla a \cdot \nabla (\psi - \psi_0) + a M\psi)dxdy,$$
and by Stokes' theorem and by the identity $\Delta= 4 \frac{\partial^2}{\partial z \partial \overline z}$
$$2i \int_{\partial \hat  \Omega} \phi(\overline \partial \psi - \overline \partial \psi_0) = 2i\int_{\hat \Omega}  \partial a  \wedge (\overline \partial \psi - \overline \partial \psi_0) + \int_{\hat \Omega} a M \psi \; dx dy.$$

Writing in coordinates we get $$\partial a \wedge \overline \partial (\psi-\psi_0) = \frac{1}{2i}\nabla a \cdot \nabla (\psi - \psi_0)dx dy + \frac{1}{2}da \wedge d(\psi-\psi_0).$$
Again by Stokes' thorem we have
$$\int_{\hat \Omega}da \wedge d(\psi - \psi_0) = - \int_{\partial \hat  \Omega} (\psi-\psi_0)da=0$$
because $\psi|_{\partial \hat \Omega} = \psi_0|_{\partial \hat \Omega}$. The proof follows.
\end{proof}

\begin{proof}[Proof of Proposition \ref{ident}]
From identity (\ref{iden}) we find

$$\hat b(\lambda)= \int_{\partial \hat \Omega} e^{-\overline \lambda  \overline z} (\Lambda_{\hat \sigma} - \Lambda_0)\hat \psi(z,\lambda) dz.$$
Using the lemma we find
\begin{align} \label{pd1}
\hat{b}(\lambda) &= 2i \int_{\partial \hat \Omega} e^{- \overline \lambda \overline z} (\overline \partial \hat \psi -\overline \partial \hat \psi_0)= 2i \int_{\partial \hat \Omega} e^{- \overline \lambda \overline z} \overline \partial \hat \psi, \\ \label{pd2}
b(\lambda) &= 2i \int_{\partial \Omega} e^{- \overline \lambda \overline w} (\overline \partial \psi -\overline \partial \psi_0)= 2i \int_{\partial \Omega} e^{- \overline \lambda \overline w} \overline \partial \psi,
\end{align}
where the second equalities follows from Stokes' theorem, the fact that $e^{- \overline \lambda \overline z}$ (resp. $e^{- \overline \lambda \overline w}$) is antiholomorphic and $\hat \psi_0$ (resp. $\psi_0$) is harmonic in $\hat \Omega$ (resp. in $\Omega$).

If we call $z = G(w) =F^{-1}(w)$ we find, from (\ref{pd1}),
\begin{align} \nonumber
\hat b (\lambda) &= 2i \int_{\partial \hat \Omega} e^{- \overline \lambda \overline z}  \frac{\partial \hat \psi}{\partial \overline z} d \overline z \\ \nonumber
&= 2i\int_{\partial \Omega} e^{-\overline{\lambda G(w)}} \overline{\left( \frac{\partial F}{\partial z} \right)} \frac{\partial \psi}{\partial \overline w}(w,\lambda) \overline{\left( \frac{\partial G}{\partial w} \right)} d \overline w\\ \label{pd3}
&= 2i\int_{\partial \Omega} e^{-\overline{\lambda G(w)}} \overline \partial \psi,
\end{align}
because $F$ (resp. $G$) is holomorphic in a neighbourhood of $\partial \hat \Omega$ (resp. $\partial \Omega$), and from the equality $\psi \circ F =\hat \psi$.

To see that (\ref{pd3}) is equal to (\ref{pd2}) we proceed as follows. Let $\Omega_R = \set{ z \in \compl : \; |z| < R}$ the disk of radius $R$, and let $R$ be sufficiently large to have $\overline \Omega \subset \Omega_R$. We apply Stokes' theorem to $\Omega_R \setminus \Omega$ and we obtain, for every quasiconformal homeomorphism $E:\compl \to \compl$, holomorphic in $\compl \setminus \Omega$,
\begin{equation} \int_{\partial \Omega} e^{- \overline{\lambda E(w)}}\overline \partial \psi = \int_{\partial \Omega_R} e^{- \overline{\lambda E(w)}}\overline \partial \psi  + \int_{\Omega_R \setminus \Omega} \partial (e^{- \overline{\lambda E(w)}}\overline \partial \psi) \nonumber
\end{equation} 
but the last term vanishes, because $e^{- \overline{\lambda E(w)}}$ is anti-holomorphic and $\partial \overline \partial \psi =0$ in $\compl \setminus \Omega$.

So the identity
$$\int_{\partial \Omega} e^{- \overline{\lambda E(w)}}\overline \partial \psi = \int_{\partial \Omega_R} e^{- \overline{\lambda E(w)}}\overline \partial \psi$$
is true for $R \gg 0$, $E(w)=G(w)$ and $E(w)=w$. As we have $G(w) = w + O(\frac{1}{|w|})$ for $w \to \infty$, using the lemma we deduce

\begin{align*}
\hat b(\lambda)&=2i \int_{\partial \Omega} e^{-\overline {\lambda G(w)}}\overline \partial \psi = \lim_{R \to \infty} 2i \int_{\partial \Omega_R} e^{- \overline{\lambda G(w)}}\overline \partial \psi \\ 
&=\lim_{R \to \infty}  2i \int_{\partial \Omega_R} e^{- \overline{\lambda w}}\overline \partial \psi = 2i\int_{\partial \Omega} e^{-\overline {\lambda w}}\overline \partial \psi  = b(\lambda) \qedhere
\end{align*}
\end{proof}

\section{The $\overline \partial$-equation and the reconstruction of $\sigma$}
Here we follow the steps of \cite{Novikov} to reconstruct isotropic conductivities.
The function $\mu(w,\lambda)=\tilde \psi (w,\lambda) e^{-\lambda w}$ satisfies the following $\overline \partial$-equation with respect to $\lambda$

\begin{equation}
\frac{\partial \mu(w,\lambda)}{\partial \overline \lambda}=\frac{b(\lambda)}{4\pi \overline \lambda}e^{\overline \lambda \overline w - \lambda w}\overline{\mu(w,\lambda)}.
\end{equation}
This is equivalent to the integral equation:
\begin{equation}\label{ciao}
\mu(w,\lambda) = 1 + \frac{1}{8 \pi^2 i}\int_{\compl}\frac{b(\lambda')}{(\lambda'-\lambda) \overline \lambda'}e^{\overline \lambda' \overline w - \lambda' w}\overline{\mu(w,\lambda')}d\lambda' \wedge d \overline \lambda'
\end{equation}
because $\mu \to 1$ when $w \to \infty$. By results of \cite{Nachman}, equation (\ref{ciao}) is solvable, and one can find $\sigma(w)$ from the integral formula
\begin{equation}
\sigma^{1/2}(w)= \mu(w,0) = 1+ \frac{1}{8 \pi^2 i}\int_{\compl}\frac{b(\lambda)}{|\lambda|^2}e^{\overline \lambda \overline w - \lambda w}\overline{\mu(w,\lambda)}d\lambda \wedge d \overline \lambda, \; \; \forall w \in \compl
\end{equation}
or from the more stable general formula
\begin{equation}
\frac{\Delta \sigma^{1/2}(w)}{\sigma^{1/2}(w)} = \frac{\Delta \tilde \psi(w,\lambda)}{\tilde \psi(w,\lambda)}, \; \; \forall w \in \compl, \; \forall \lambda \in \compl.
\end{equation}

\end{document}